\tikzset{paint/.style={ draw=#1!50!black, fill=#1!50 },
    decorate with/.style=
    {decorate,decoration={shape backgrounds,shape=#1,shape size=2mm}}}
\definecolor{skyblue}{rgb}{0.85,0.85,1}
\newcommand\nod{\nu} 
\newtheorem{theorem}{Theorem}
\newtheorem{define}{Definition}
\newtheorem{lemma}{Lemma}
\newtheorem{obs}{Observation}
\newtheorem{hyp}{Hypothesis}
\newcommand{\bbN}{\mathbb{N}}             
\newcommand{\bbR}{\mathbb{R}}             
\newcommand{\p}{\partial}				
\newcommand{\pO}{{\partial \Omega}}
\DeclareMathOperator{\Mor}{Mor}
\DeclareMathOperator{\spec}{spec}
\begin{document}
\title{Nodal deficiency, spectral flow, and the Dirichlet-to-Neumann map}
\author{Gregory
  Berkolaiko}\email{berko@math.tamu.edu}\address{Department of
  Mathematics, Texas A\&M University, College Station, TX 77843-3368, USA}
\author{Graham Cox}\email{gcox@mun.ca}\address{Department of Mathematics and Statistics, Memorial University of Newfoundland, St. John's, NL A1C 5S7, Canada}
\author{Jeremy L. Marzuola}\email{marzuola@email.unc.edu}\address{Dept. of Mathematics, University of North Carolina at Chapel Hill, CB 3250 Phillips Hall, Chapel Hill, NC 27599-3250, USA}

\maketitle
\begin{abstract}
It was recently shown that the nodal deficiency of an eigenfunction is encoded in the spectrum of the Dirichlet-to-Neumann operators for the eigenfunction's positive and negative nodal domains. While originally derived using symplectic methods, this result can also be understood through the spectral flow for a family of boundary conditions imposed on the nodal set, or, equivalently, a family of operators with delta function potentials supported on the nodal set. In this paper we explicitly describe this flow for a Schr\"odinger operator with separable potential on a rectangular domain, and determine a mechanism by which lower energy eigenfunctions do or do not contribute to the nodal deficiency.
\end{abstract}


\section{Introduction}
Let $\Omega \subset \bbR^d$ be a bounded domain with sufficiently smooth boundary, and denote by $\lambda_1 < \lambda_2 \leq \lambda_3 \leq \cdots$ the eigenvalues of the Laplacian, with eigenfunctions $\phi_1, \phi_2, \ldots$, where we have imposed either Dirichlet or Neumann boundary conditions on $\pO$. As in Sturm--Liouville theory, one is often interested in quantifying the oscillation of $\phi_k$ in terms of the index $k$.

The \textit{nodal domains} of $\phi_k$ are the connected components of the set $\{\phi_k \neq 0\}$. We denote the total number of nodal domains by $\nod(\phi_k)$. Courant's nodal domain theorem says that $\phi_k$ has at most $k$ nodal domains \cite{CH53}. In other words, the \emph{nodal deficiency}
\begin{align}
	\delta(\phi_k) := k - \nod(\phi_k)
\end{align}
is nonnegative. Beyond this, however, little is known. While it has been shown that the deficiency only vanishes for finitely many $k$ \cite{P56}, it is generally very difficult to compute, or even estimate.

In \cite{BKS12} the first author, Kuchment and Smilansky gave an explicit formula for the nodal deficiency as the Morse index of an energy functional defined on the space of equipartitions of $\Omega$. More recently \cite{CJM2}, the second two authors, with Jones, computed the nodal deficiency in terms of the spectra of Dirichlet-to-Neumann operators using Maslov index tools developed in \cite{CJM14,DJ11}. In particular, for a simple eigenvalue $\lambda_k$ with Lipschitz nodal domains, it was shown that
\begin{align}\label{def:simple}
	\delta(\phi_k) = \Mor\left( \Lambda_+(\epsilon) + \Lambda_-(\epsilon) \right)
\end{align}
for sufficiently small $\epsilon>0$, where $\Lambda_\pm(\epsilon)$ denote the Dirichlet-to-Neumann maps for the perturbed operator $\Delta +  (\lambda_k + \epsilon)$, evaluated on the positive and negative nodal domains $\Omega_\pm = \{\pm\phi_k > 0\}$, and $\Mor$ denotes the \emph{Morse index}, or number of negative eigenvalues.  For more on the spectrum of Dirichlet-to-Neumann operators, see \cite{AM12,friedlander1991some,M91} and the recent survey \cite{girouard2017spectral}.  

Similarly, if $\phi_*$ is an eigenfunction for a degenerate eigenvalue $\lambda_*$, the same argument yields
\begin{align}\label{def:degenerate}
	\delta(\phi_*) = 1 - \dim \ker (\Delta + \lambda_*) + \Mor\left( \Lambda_+(\epsilon) + \Lambda_-(\epsilon) \right).
\end{align}
Note that the Dirichlet-to-Neumann maps depend explicitly on the
choice of eigenfunction $\phi_* \in \ker (\Delta + \lambda_*)$. In
defining the nodal deficiency of $\phi_*$, we let $k=k_* = \min\{n \in
\bbN : \lambda_n = \lambda_*\}$.

Equations \eqref{def:simple} and \eqref{def:degenerate} remain valid
for the Schr\"odinger operator $L=-\Delta+V$ with sufficiently regular
potential, for instance $V \in L^\infty(\Omega)$.  These formulas were originally obtained from a general spectral
decomposition formula, derived using symplectic methods in
\cite{CJM2}. In Section \ref{sec:flow} we give a more direct proof using spectral flow. For a fixed $\phi_*$ we construct a monotone family of selfadjoint operators $\{L_\sigma\}_{\sigma \geq 0}$, starting at $L_0 = L$, such that the nodal deficiency of $\phi_*$ equals the number of eigenvalue curves for $L_\sigma$ that pass through $\lambda_* + \epsilon$ for some $\sigma>0$; see Figure \ref{fig:qgraphrect} for an illustration.
This invites
a question of potentially great significance: {\it what properties
of the eigenpair $(\lambda_j, \phi_j)$, $\lambda_j \leq \lambda_*$, determine whether the
corresponding spectral flow curve will cross $\lambda_{*}+\epsilon$ and thus
contribute to the nodal deficiency of the eigenfunction $\phi_{*}$?}

\begin{figure}
\includegraphics[width=0.45\textwidth]{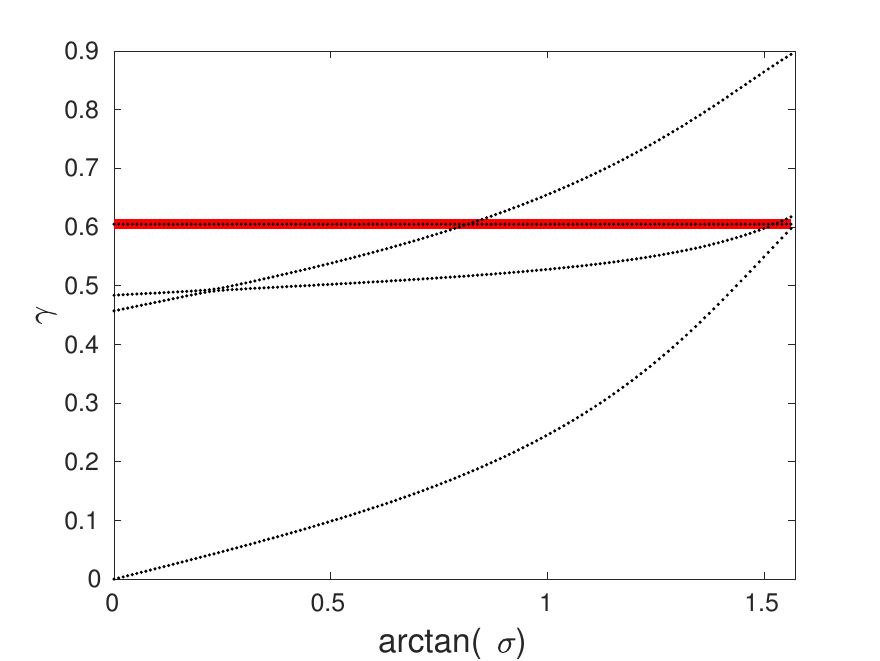}
\includegraphics[width=0.45\textwidth]{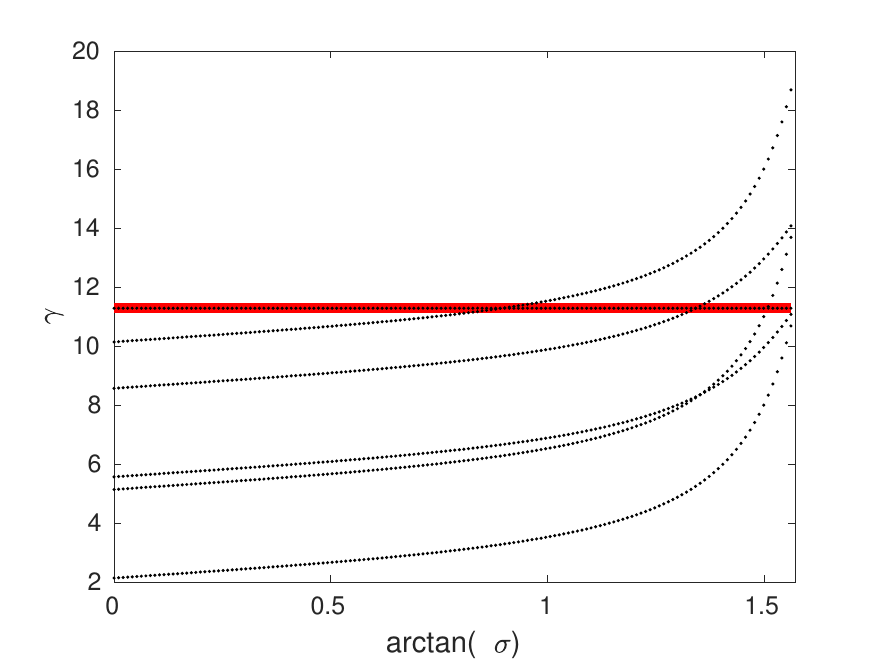}
\caption{Numero-analytic solution of the spectral flow on the tetrahedron quantum graph (left) and on a rectangle (right), as described in Appendix \ref{app:rec}. In both cases the number of curves crossing
$\lambda_*+\epsilon$ matches the nodal deficiency (2 on the left and 3 on the right).}
\label{fig:qgraphrect}
\end{figure}

The main result of this paper is a beautifully geometric answer to this question on rectangular domains, illustrated in Figure~\ref{fig:ellipse}.  Informally speaking, the intersecting
curves arise from the eigenvalues corresponding to the points within
the ellipse but outside the rectangle (both regions are
specified by $(\lambda_{*},\phi_{*})$). This geometric interpretation of the nodal deficiency on a rectangle appeared in \cite{BBF17} (see in particular Figure 10). The advantage of our construction is that it describes precisely how these lattice points contribute to the nodal deficiency, through a mechanism (the spectral flow) which is defined on any domain. Before we explain the precise
meaning of this statement, we mention that for non-separable
problems the situation is likely to be far more complicated due to the
presence of avoided crossings; for example the ``intersection'' around
$\arctan(\sigma)=0.2$ on Figure~\ref{fig:qgraphrect}(left) is in fact
an avoided crossing; see Figure~\ref{fig:graphzoom}.

\begin{figure}[!tbp]
\begin{subfigure}[b]{0.4\textwidth}
	\begin{tikzpicture}[scale=0.9]
		\draw[thick] (0,3) -- (5,3); 
		\draw[thick] (5,0) -- (5,3); 
		\draw[thick] (6.5,0) arc (0:90:6.5 and 4.7);
		\draw[very thick,->] (0,0) -- (7,0); 
		\draw[very thick,->] (0,0) -- (0,5); 
		\draw (1,1) circle[radius=2pt];
		\draw (1,2) circle[radius=2pt];
		\draw (1,3) circle[radius=2pt];
		\fill (1,4) circle[radius=2pt];
		\draw (2,1) circle[radius=2pt];
		\draw (2,2) circle[radius=2pt];
		\draw (2,3) circle[radius=2pt];
		\fill (2,4) circle[radius=2pt];
		\draw (3,1) circle[radius=2pt];
		\draw (3,2) circle[radius=2pt];
		\draw (3,3) circle[radius=2pt];
		\fill (3,4) circle[radius=2pt];
		\draw (4,1) circle[radius=2pt];
		\draw (4,2) circle[radius=2pt];
		\draw (4,3) circle[radius=2pt];
		\draw (4,4) circle[radius=2pt];
		\draw (5,1) circle[radius=2pt];
		\draw (5,2) circle[radius=2pt];
		\draw (5,3) circle[radius=2pt];
		\draw (5,4) circle[radius=2pt];
		\fill (6,1) circle[radius=2pt];
		\draw (6,2) circle[radius=2pt];
		\draw (6,3) circle[radius=2pt];
		\draw (6,4) circle[radius=2pt];
	\end{tikzpicture}
	\caption{simple eigenvalue}
\end{subfigure}
\begin{subfigure}[b]{0.4\textwidth}
	\begin{tikzpicture}[scale=0.9]
		\draw[thick] (0,3) -- (5,3); 
		\draw[thick] (5,0) -- (5,3); 
		\draw[thick] (6.65,0) arc (0:90:6.65 and 4.6);
		\draw[very thick,->] (0,0) -- (7,0); 
		\draw[very thick,->] (0,0) -- (0,5); 
		\draw (1,1) circle[radius=2pt];
		\draw (1,2) circle[radius=2pt];
		\draw (1,3) circle[radius=2pt];
		\fill (1,4) circle[radius=2pt];
		\draw (2,1) circle[radius=2pt];
		\draw (2,2) circle[radius=2pt];
		\draw (2,3) circle[radius=2pt];
		\fill (2,4) circle[radius=2pt];
		\draw (3,1) circle[radius=2pt];
		\draw (3,2) circle[radius=2pt];
		\draw (3,3) circle[radius=2pt];
		\fill (3,4) circle[radius=2pt];
		\draw (4,1) circle[radius=2pt];
		\draw (4,2) circle[radius=2pt];
		\draw (4,3) circle[radius=2pt];
		\draw (4,4) circle[radius=2pt];
		\draw (5,1) circle[radius=2pt];
		\draw (5,2) circle[radius=2pt];
		\draw (5,3) circle[radius=2pt];
		\draw (5,4) circle[radius=2pt];
		\fill (6,1) circle[radius=2pt];
		\draw (6,2) circle[radius=3pt];
		\fill (6,2) circle[radius=2pt];
		\draw (6,3) circle[radius=2pt];
		\draw (6,4) circle[radius=2pt];
	\end{tikzpicture}
	\caption{degenerate eigenvalue}
\end{subfigure}
\caption{Illustrating the result of Observations \ref{obs:rectangle1} and \ref{obs:rectangle2}. For the simple eigenvalue (A), the nodal deficiency is 4, which equals the Morse index of $\Lambda_+ + \Lambda_-$. The degenerate eigenvalue (B) also has nodal deficiency 4. The point $(6,2)$ generates an additional negative eigenvalue of $\Lambda_+ + \Lambda_-$ but does not contribute to the nodal deficiency.}
\label{fig:ellipse}
\end{figure}




Consider the rectangular domain
$R_\alpha = [0,\alpha\pi] \times [0, \pi]$ with $\alpha > 0$. We first
illustrate our result for the Laplacian, where the computations can be
done explicitly. The general statement is formulated and proved in
Section \ref{sec:rectangle}. The spectrum of $-\Delta$ with Dirichlet
boundary conditions on $R_\alpha$ is in one-to-one correspondence with
the points of $\bbN^2$, namely
\begin{equation}
\label{spec:LRa}
	\sigma(-\Delta) = \left\{  \left( \frac{m}{\alpha} \right)^2 + n^2 \ : \ m,n \in \bbN \right\}.
\end{equation}

For a given eigenvalue $\lambda_* = (m_*/\alpha )^2 + n_*^2$, we have $\lambda_* = \lambda_{k_*}$, where
\[
	k_* = \# \left\{(m,n)  : (m/\alpha)^2 + n^2 < \lambda_* \right\} + 1.
\]
This counts the lattice points in the region bounded by the quarter ellipse
\begin{equation}
	E_{\lambda_*} = \left\{ (x,y) \ : \ x > 0, \ y > 0, \
          (x/\alpha)^2 + y^2 < \lambda_* \right\},
\end{equation}
plus the point $(m_*,n_*)$, which lies on the ellipse. On the other hand, the corresponding eigenfunction $\sin (m_* x/\alpha) \sin (n_* y)$ has $m_* n_*$ nodal domains, which coincides with the number of lattice points contained in the rectangle
\begin{equation}
	R_{\lambda_*} = \left\{ (x,y) \ : \  0 < x \leq m_*,\ 0 < y \leq n_* \right\}.
\end{equation}
That is, the nodal deficiency equals the number of lattice points under the ellipse but outside the rectangle, as illustrated in Figure \ref{fig:ellipse}.

\begin{obs}\label{obs:rectangle1}
The nodal deficiency of the $(m_*, n_*)$ eigenfunction is equal to the number of lattice points in the region $E_* \setminus R_*$.
\end{obs}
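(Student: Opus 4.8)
The plan is to assemble the statement from the two lattice-point counts already recorded in the preceding discussion, so that the proof reduces to elementary bookkeeping. By definition the nodal deficiency is $\delta(\phi_*) = k_* - \nod(\phi_*)$, and I would start by rewriting each term as a count of lattice points. Set
\[
	A := \#\left\{(m,n) \in \bbN^2 : (m/\alpha)^2 + n^2 < \lambda_* \right\},
\]
the number of lattice points strictly inside the open quarter ellipse $E_*$. The given formula for $k_*$ reads $k_* = A + 1$, while the product structure of the eigenfunction $\sin(m_* x/\alpha)\sin(n_* y)$ gives $\nod(\phi_*) = m_* n_*$, which is exactly the number of lattice points contained in the rectangle $R_*$.

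The key observation to establish is that the lattice points of $R_*$ sit inside the closed ellipse, meeting its boundary only at the corner. Concretely, for any $(m,n)$ with $1 \le m \le m_*$ and $1 \le n \le n_*$ one has $(m/\alpha)^2 + n^2 \le (m_*/\alpha)^2 + n_*^2 = \lambda_*$, and since a strict increase in either coordinate strictly increases the left-hand side, equality holds if and only if $(m,n) = (m_*,n_*)$. Hence $R_*$ contains exactly $m_* n_* - 1$ lattice points lying strictly inside $E_*$, the remaining one, $(m_*,n_*)$, lying on the bounding ellipse and therefore outside the open region $E_*$.

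I would then partition the $A$ lattice points strictly inside $E_*$ according to whether or not they belong to $R_*$: those in $R_*$ number $m_* n_* - 1$ by the previous step, so those in $E_* \setminus R_*$ number $A - (m_* n_* - 1)$. Combining this with the two identities above yields
\[
	\delta(\phi_*) = k_* - \nod(\phi_*) = (A+1) - m_* n_* = A - (m_* n_* - 1) = \#\big\{(m,n) \in E_* \setminus R_* \big\},
\]
which is the claim.

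There is no substantial obstacle here; the only point demanding care is the boundary bookkeeping. One must keep track that $E_*$ is an \emph{open} region, so the corner $(m_*,n_*)$ is excluded from it; that $R_*$ is the half-open rectangle $0 < x \le m_*$, $0 < y \le n_*$, so the corner does belong to $R_*$ and is thus correctly removed in forming $E_* \setminus R_*$; and that the inequality of the second step is strict except at the corner. For a simple eigenvalue $(m_*,n_*)$ is in fact the unique lattice point on the quarter ellipse, which keeps this accounting unambiguous.
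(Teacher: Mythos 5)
Your proof is correct and follows essentially the same route as the paper, which derives the observation directly from the two counts $k_* = \#\{(m,n) : (m/\alpha)^2+n^2 < \lambda_*\} + 1$ and $\nod(\phi_*) = m_* n_* = \#(R_* \cap \bbN^2)$. The only addition is your explicit verification that every lattice point of $R_*$ other than the corner $(m_*,n_*)$ lies strictly inside $E_*$, a boundary-bookkeeping step the paper leaves implicit (and which, as you note, also covers the degenerate case since all other lattice points on the ellipse lie outside both the open region $E_*$ and the rectangle $R_*$).
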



This holds whether or not $\lambda_*$ is simple. When $\lambda_*$ is simple, we conclude from \eqref{def:simple} that the Morse index of $\Lambda_+(\epsilon) + \Lambda_-(\epsilon)$ equals the number of lattice points in $E_* \setminus R_*$. On the other hand, when $\lambda_*$ is degenerate, $\Lambda_+(\epsilon) + \Lambda_-(\epsilon)$ has an additional $\dim \ker (\Delta + \lambda_{*}) - 1$ negative eigenvalues, according to \eqref{def:degenerate}. This coincides with the number of lattice points on the ellipse, as shown in Figure~\ref{fig:ellipse}(B).

\begin{obs}\label{obs:rectangle2}
The Morse index of $\Lambda_+(\epsilon) + \Lambda_-(\epsilon)$ is equal to the number of lattice points in the region $\overline{E_*} \setminus R_*$.
\end{obs}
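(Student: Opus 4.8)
The plan is to deduce Observation~\ref{obs:rectangle2} from Observation~\ref{obs:rectangle1} together with the spectral formulas \eqref{def:simple} and \eqref{def:degenerate}, reducing everything to a short lattice-point count on the boundary arc. Write $d = \dim \ker(\Delta + \lambda_*)$ for the multiplicity of $\lambda_*$. Since the spectrum \eqref{spec:LRa} is in bijection with $\bbN^2$ and the eigenfunctions $\sin(mx/\alpha)\sin(ny)$ are linearly independent, $d$ equals the number of lattice points $(m,n)$ with $(m/\alpha)^2 + n^2 = \lambda_*$, i.e.\ the number of lattice points lying on the boundary arc $\partial E_*$. The difference between the two regions appearing in the two Observations is exactly this arc: as sets of lattice points, $\overline{E_*} \setminus R_*$ is obtained from $E_* \setminus R_*$ by adjoining those boundary lattice points that lie outside $R_*$.

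The key step is the combinatorial claim that among the $d$ lattice points on $\partial E_*$, exactly one lies in $R_*$, namely $(m_*,n_*)$ itself. First, $(m_*,n_*) \in R_*$ since $1 \le m_* \le m_*$ and $1 \le n_* \le n_*$. Conversely, suppose $(m,n)$ lies on $\partial E_*$ and in $R_*$, so that $m \le m_*$ and $n \le n_*$; then $(m/\alpha)^2 + n^2 \le (m_*/\alpha)^2 + n_*^2 = \lambda_*$, with equality forcing $m = m_*$ and $n = n_*$. Because $(m,n)$ is on the arc the inequality is an equality, so $(m,n) = (m_*,n_*)$. Hence the remaining $d-1$ boundary lattice points all lie strictly outside $R_*$, and counting lattice points gives
\[
	\#\!\left( \overline{E_*} \setminus R_* \right) = \#\!\left( E_* \setminus R_* \right) + (d-1).
\]

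It remains to assemble the pieces. When $\lambda_*$ is simple, $d = 1$, the two regions contain the same lattice points, and combining \eqref{def:simple} with Observation~\ref{obs:rectangle1} gives $\Mor\!\left( \Lambda_+(\epsilon) + \Lambda_-(\epsilon) \right) = \delta(\phi_*) = \#(E_* \setminus R_*) = \#(\overline{E_*} \setminus R_*)$. When $\lambda_*$ is degenerate, \eqref{def:degenerate} reads $\Mor\!\left( \Lambda_+(\epsilon) + \Lambda_-(\epsilon) \right) = \delta(\phi_*) + (d-1)$, and substituting Observation~\ref{obs:rectangle1} together with the displayed count yields the same conclusion. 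I expect no deep obstacle here, since this is bookkeeping once Observation~\ref{obs:rectangle1} is available; the only points requiring care are confirming that the geometric multiplicity $d$ genuinely equals the number of arc lattice points (so that the term $\dim\ker(\Delta+\lambda_*)$ in \eqref{def:degenerate} matches the picture), and verifying via the monotonicity argument above that no boundary lattice point other than $(m_*,n_*)$ slips into $R_*$.
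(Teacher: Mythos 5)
Your proposal is correct and follows essentially the same route as the paper, which likewise deduces Observation~\ref{obs:rectangle2} from Observation~\ref{obs:rectangle1} together with \eqref{def:simple} and \eqref{def:degenerate}, identifying the extra $\dim\ker(\Delta+\lambda_*)-1$ negative eigenvalues with the lattice points on the elliptical arc other than $(m_*,n_*)$. Your only addition is to spell out explicitly the (correct) monotonicity argument showing that $(m_*,n_*)$ is the unique arc point lying in $R_*$, a detail the paper leaves implicit.
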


Using the spectral flow, we prove that this is not just a numerical coincidence---it is precisely the eigenvalues corresponding to points in $\overline{E_*} \setminus
R_*$ (via equation (\ref{spec:LRa})) that give rise to the spectral flow
curves which cross $\lambda_*+\epsilon$ and thus generate negative
eigenvalues of $\Lambda_+(\epsilon) + \Lambda_-(\epsilon)$.
In Section \ref{sec:rectangle} we formalize this statement as Theorem
\ref{thm:main} and prove it.  The result is valid for any Schr\"odinger
operator with separable potential, and hence does not rely on having
explicit formulas for the eigenvalues and eigenfunctions, as was the
case above.

The spectral flow method can be easily generalized to other settings,
such as Schr\"odinger operators on manifolds and metric
graphs. Figure~\ref{fig:qgraphrect}(left) shows the results
of a numerical computation of eigenvalues of $L_\sigma$ defined on the
nodal set of a deficiency 2 eigenfunction of a metric graph (see, for
instance, \cite{berkolaiko2017elementary} for an accessible
introduction to the subject). Figure~\ref{fig:qgraphrect}(right)
shows a similar computation for a deficiency $3$ eigenfunction of a
rectangular domain.  In both cases, the number of curves crossing
$\lambda_*+\epsilon$ matches the nodal deficiency. As above, the 
main issue it to determine which eigenpairs $(\lambda_j, \phi_j)$ are responsible
for the nodal deficiency.

\begin{figure}
  \includegraphics[width = 0.45\textwidth]{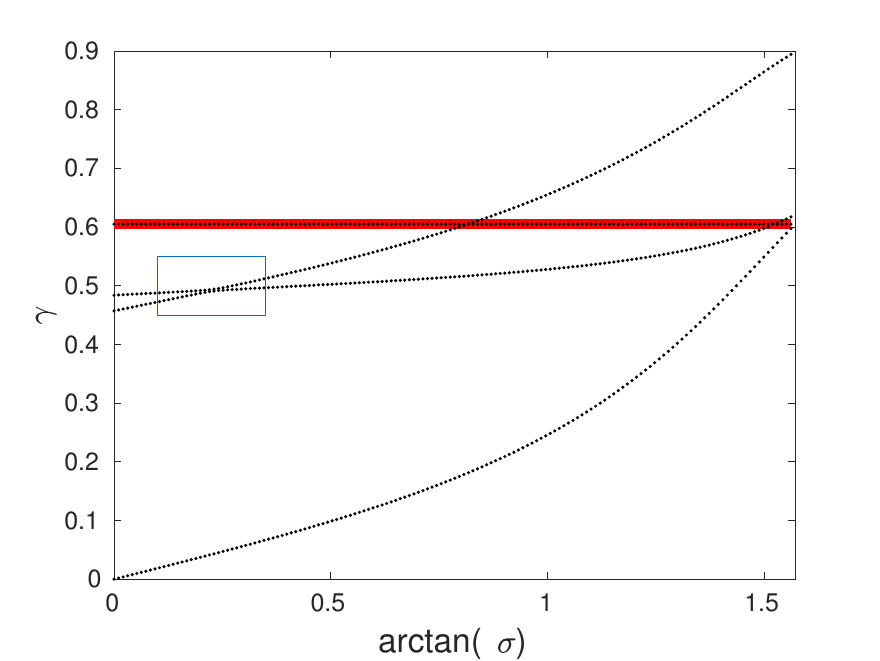}
  \includegraphics[width = 0.45\textwidth]{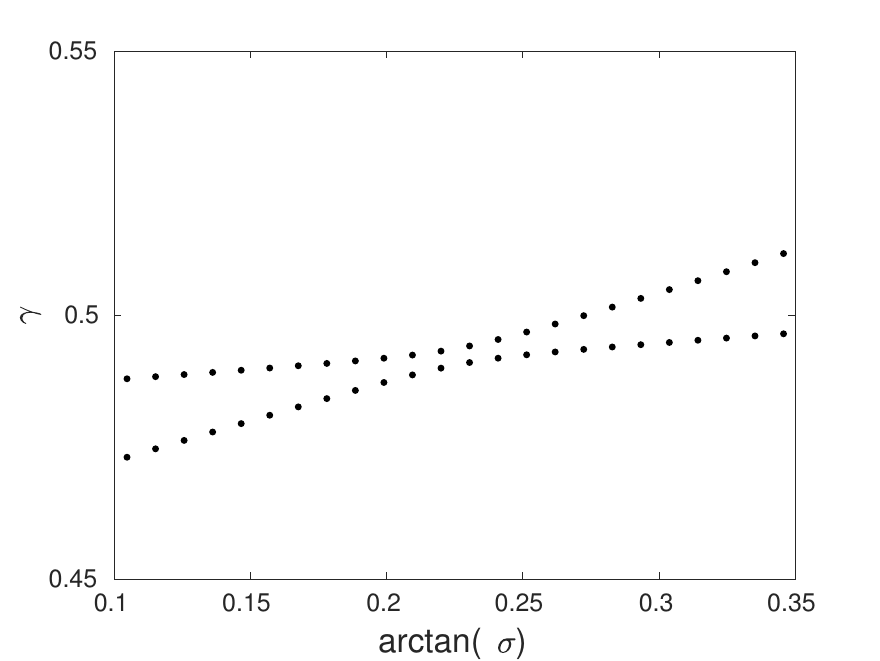}
  \caption{Suspected intersection of flow curves for a quantum
    graph and its zoom.}
  \label{fig:graphzoom}
\end{figure}


\subsection*{Acknowledgements}
The authors would like to thank Ram Band for interesting discussions and helpful suggestions regarding the manuscript. G.B. acknowledges partial support from the NSF under grant
DMS-1410657.  G.C. acknowledges the support of NSERC grant RGPIN-2017-04259.  J.L.M. was supported in part by  NSF Applied Math Grant DMS-1312874 and NSF CAREER Grant DMS-1352353.

\section{The spectral flow}\label{sec:flow}

We now describe in more detail the spectral flow mentioned in the introduction, in the process giving a new proof of \eqref{def:simple} and \eqref{def:degenerate}, and setting the stage for our analysis of the rectangle.


Consider the Schr\"odinger operator $L = -\Delta + V$ on a bounded, Lipschitz domain $\Omega$, with Dirichlet boundary conditions. Let $\lambda_* \in \spec(L)$, and suppose $\phi_*$ is an eigenfunction for $\lambda_*$, with nodal set $\Gamma = \{x \in \Omega : \phi_*(x) = 0\}$. Throughout this section we impose the following assumption.

\begin{hyp}
Each nodal domain of $\phi_*$ has Lipschitz boundary.
\end{hyp}

For $n=2$ the hypothesis is always satisfied (see \cite[Theorem 2.5]{C76}) but its validity appears to be unknown in higher dimensions. In the absence of this assumption one can still define the Dirichlet-to-Neumann maps, following \cite{AE11}, but it is not immediately clear that they will have compact resolvent, and so the spectral flow argument becomes more complicated. We do not pursue this technical issue in the current paper.

We define a family of selfadjoint operators $L_\sigma$ via the bilinear forms
\begin{align}
	B_\sigma(u,v) = \int_\Omega \left[\nabla u \cdot \nabla v + Vuv\right] + \sigma \int_\Gamma uv
\end{align}
on $H^1_0(\Omega)$ for any $\sigma \in [0,\infty)$, and let $L_\infty$ denote the operator with Dirichlet boundary conditions on $\pO \cup \Gamma$. We denote by $\{\gamma_k(\sigma)\}$ the analytic eigenvalue branches for $L_\sigma$.  We first describe the relationship between these eigenvalue curves and the spectrum of $\Lambda_+ (\epsilon) + \Lambda_- (\epsilon)$.

\begin{lemma}\label{lem:DtN1}
For $\epsilon$ sufficiently small, the value $-\sigma$ is an eigenvalue of $\Lambda_+ (\epsilon) + \Lambda_- (\epsilon)$ if and only if $\lambda_* + \epsilon = \gamma_k(\sigma)$ for some $k \in \bbN$.
\end{lemma}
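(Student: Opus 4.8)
The plan is to translate each spectral condition into a statement about solutions of the same boundary value problem on the nodal domains $\Omega_\pm$, so that the two become literally the same equation on the common boundary $\Gamma$. Note first that the condition ``$\lambda_* + \epsilon = \gamma_k(\sigma)$ for some $k$'' simply asserts that $\lambda_* + \epsilon$ is an eigenvalue of $L_\sigma$. To unwind this, I would integrate by parts in the weak eigenvalue equation $B_\sigma(u,v) = (\lambda_* + \epsilon)\int_\Omega uv$: a nonzero $u \in H^1_0(\Omega)$ is an eigenfunction precisely when $(-\Delta + V - (\lambda_* + \epsilon))u = 0$ on each $\Omega_\pm$, $u = 0$ on $\partial\Omega$, and the transmission condition $\partial_{\nu_+} u + \partial_{\nu_-} u + \sigma u = 0$ holds on $\Gamma$. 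The boundary term $\sigma\int_\Gamma uv$ is exactly what produces the jump in the conormal derivatives, while continuity of $u$ across $\Gamma$ is automatic from membership in $H^1_0(\Omega)$.

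Next I would invoke the definition of the Dirichlet-to-Neumann maps: for data $f$ on $\Gamma$, let $u_\pm$ solve $(-\Delta + V - (\lambda_* + \epsilon))u_\pm = 0$ on $\Omega_\pm$ with $u_\pm = f$ on $\Gamma$ and $u_\pm = 0$ on $\partial\Omega \cap \partial\Omega_\pm$, and set $\Lambda_\pm(\epsilon)f = \partial_{\nu_\pm} u_\pm|_\Gamma$. Given an eigenfunction $u$ of $L_\sigma$ at energy $\lambda_* + \epsilon$, put $f = u|_\Gamma$; by uniqueness each restriction $u|_{\Omega_\pm}$ coincides with the DtN solution for data $f$, so the transmission condition becomes $(\Lambda_+(\epsilon) + \Lambda_-(\epsilon))f = -\sigma f$, exhibiting $-\sigma$ as an eigenvalue. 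Conversely, given $f$ with $(\Lambda_+(\epsilon) + \Lambda_-(\epsilon))f = -\sigma f$, I would glue the solutions $u_\pm$ into a single $u \in H^1_0(\Omega)$ (their traces on $\Gamma$ both equal $f$), which then solves the weak equation for $L_\sigma$ at $\lambda_* + \epsilon$. These restriction and gluing maps are mutual inverses, giving the claimed equivalence.

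The step I expect to be most delicate is justifying that the Dirichlet-to-Neumann maps are well-defined at energy $\lambda_* + \epsilon$, i.e. that the interior problems on $\Omega_\pm$ are uniquely solvable. This is precisely where the hypothesis ``$\epsilon$ sufficiently small'' is needed: since $\phi_*$ has a fixed sign on each nodal domain, its restriction is a ground state there, so $\lambda_*$ is the lowest Dirichlet eigenvalue of $L$ on $\Omega_\pm$. For small $\epsilon > 0$ the energy $\lambda_* + \epsilon$ then lies strictly below the next Dirichlet eigenvalue and is not in the Dirichlet spectrum of either $\Omega_\pm$, so the interior problems are invertible and $\Lambda_\pm(\epsilon)$ is bounded. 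The same non-degeneracy disposes of the only potential gap in the argument above, namely the possibility that an $L_\sigma$-eigenfunction has vanishing trace $f = u|_\Gamma = 0$: in that case each $u_\pm$ would solve a homogeneous Dirichlet problem and hence vanish, forcing $u \equiv 0$, a contradiction. Thus every relevant eigenfunction has a genuine nonzero trace, and the correspondence is faithful in both directions.
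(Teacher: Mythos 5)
Your proposal is correct and follows essentially the same route as the paper: unwind the weak formulation of $L_\sigma$ into the transmission problem on $\Omega_\pm$, identify restrictions of eigenfunctions with the Dirichlet-to-Neumann solutions, and use the fact that $\lambda_*$ is the ground state energy on each nodal domain (so that $\lambda_*+\epsilon$ avoids the Dirichlet spectrum there for small $\epsilon$) to rule out vanishing traces. The paper phrases this last step via the spectral gap of $L_\infty$, which is just the direct sum of the Dirichlet operators on the nodal domains, so the two arguments coincide.
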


\begin{proof}
First suppose $-\sigma$ is an eigenvalue of $\Lambda_+ (\epsilon) + \Lambda_- (\epsilon)$, with eigenfunction $f \in H^{1/2}(\Gamma)$. Then there is a function $u \in H^1_0(\Omega)$ such that $u\big|_\Gamma = f$, with
\begin{align}\label{eqn:evalue}
	-\Delta u + Vu = (\lambda_* + \epsilon)u
\end{align}
in $\Omega \setminus \Gamma$ and
\begin{align}\label{eqn:Steklov}
	\frac{\p u}{\ \p \nu_+} + \frac{\p u}{\ \p \nu_-} + \sigma u  = 0
\end{align}
on $\Gamma$. This means $\lambda_* + \epsilon$ is an eigenvalue of $L_\sigma$, and so $\lambda_* + \epsilon = \gamma_k(\sigma)$ for some $k \in \bbN$.

Conversely, suppose $\lambda_* + \epsilon = \gamma_k(\sigma)$ for some $k$. The corresponding eigenfunction $u$ will by definition satisfy \eqref{eqn:evalue} in $\Omega \setminus \Gamma$ and \eqref{eqn:Steklov} on $\Gamma$, and so $f := u\big|_\Gamma$ satisfies the eigenvalue equation
\[
	\Lambda_+(\epsilon)f + \Lambda_-(\epsilon)f + \sigma f = 0.
\]
To complete the proof we must show that $f$ is not identically zero on $\Gamma$. If this was the case, $\lambda_* + \epsilon$ would be an eigenvalue of $L_\infty$, which is not possible because $\lambda_*$ is the first eigenvalue of $L_\infty$ and $\epsilon >0$ can be taken sufficiently small such that $\lambda_* + \epsilon$ lies in the spectral gap.
\end{proof}

Motivated by this result, we make the following definition.

\begin{define}
An eigenvalue curve $\gamma_k(\sigma)$ is said to give rise to a negative eigenvalue of $\Lambda_+(\epsilon) + \Lambda_-(\epsilon)$ if $\gamma_k(\sigma) = \lambda_* + \epsilon$ for some $\sigma > 0$.
\end{define}


Lemma \ref{lem:DtN1} says that $-\sigma$ is a negative eigenvalue of $\Lambda_+ (\epsilon) + \Lambda_- (\epsilon)$ if and only if there is an eigenvalue curve $\gamma_k(\sigma)$ that gives rise to it. In other words, the Morse index of $\Lambda_+ (\epsilon) + \Lambda_- (\epsilon)$, and hence the nodal deficiency of the eigenfunction $\phi_*$, is completely determined by the curves $\{\gamma_k(\sigma)\}$. Determining whether or not a given curve intersects $\lambda_* + \epsilon$ for some $\sigma>0$ is simplified by the following monotonicity result, which says that one simply needs to check the endpoints $\gamma_k(0)$ and $\gamma_k(\infty)$.



\begin{lemma}\label{gamma:mon}
If $u_k(\sigma)$ is analytic curve of normalized eigenfunctions for $\gamma_k(\sigma)$, then
\begin{align}\label{lambdaprime}
	\gamma_k'(\sigma) = \int_\Gamma u_k(\sigma)^2.
\end{align}
If $\gamma_k(0) \in \spec(L_\infty)$, then $\gamma_k(\sigma)$ is constant; otherwise $\gamma_k(\sigma)$ is strictly increasing.
\end{lemma}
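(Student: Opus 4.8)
The plan is to establish \eqref{lambdaprime} by a Hellmann--Feynman (first variation) computation, and then read off the monotonicity dichotomy from the sign of the right-hand side together with analyticity. Since the forms $B_\sigma = B_0 + \sigma\int_\Gamma(\,\cdot\,)(\,\cdot\,)$ all share the domain $H^1_0(\Omega)$ and depend affinely, hence analytically, on $\sigma$, this is a self-adjoint holomorphic family of type (B); by Rellich's theorem the eigenvalues $\gamma_k(\sigma)$ and an associated orthonormal system of eigenfunctions $u_k(\sigma)$ may be taken real-analytic in $\sigma$, which I take as the meaning of ``analytic curve.''

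First I would differentiate the identity $\gamma_k(\sigma) = B_\sigma\big(u_k(\sigma), u_k(\sigma)\big)$, valid because $u_k(\sigma)$ is $L^2$-normalized. Writing $u = u_k(\sigma)$ and denoting the $\sigma$-derivative by a dot, the explicit $\sigma$-dependence of $B_\sigma$ contributes $\int_\Gamma u^2$, while the terms coming from $\dot u$ assemble into $2\,B_\sigma(\dot u, u)$. The weak eigenvalue equation with test function $\dot u$ and symmetry of the form give $B_\sigma(\dot u, u) = \gamma_k(\sigma)\langle u, \dot u\rangle$, and differentiating $\|u\|_{L^2}^2 = 1$ gives $\langle u, \dot u\rangle = 0$; hence the $\dot u$ terms drop and $\gamma_k'(\sigma) = \int_\Gamma u_k(\sigma)^2$, which is \eqref{lambdaprime}.

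Next, since the right-hand side is manifestly nonnegative, $\gamma_k$ is nondecreasing. To sharpen this I would invoke analyticity: $\gamma_k'$ is a nonnegative real-analytic function, so it is either identically zero, in which case $\gamma_k$ is constant, or it vanishes only on a discrete set, in which case $\gamma_k(b) - \gamma_k(a) = \int_a^b \gamma_k' > 0$ for all $a < b$ and $\gamma_k$ is strictly increasing. To match these alternatives with the criterion $\gamma_k(0) \in \spec(L_\infty)$, the key observation is that $\gamma_k'(\sigma_0) = 0$ forces the trace $u_k(\sigma_0)\big|_\Gamma$ to vanish; such a function satisfies $\int_\Gamma u_k(\sigma_0)\, v = 0$ for every $v$, so $B_\sigma(u_k(\sigma_0), \cdot) = B_{\sigma_0}(u_k(\sigma_0), \cdot)$ for \emph{all} $\sigma$, and $u_k(\sigma_0)$ is therefore an eigenfunction of every $L_\sigma$ with the \emph{same} eigenvalue. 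Being a zero-trace eigenfunction, it restricts to a Dirichlet eigenfunction on each $\Omega_\pm$, so $\gamma_k(\sigma_0) \in \spec(L_\infty)$; conversely, any $L_\infty$-eigenfunction has zero trace on $\Gamma$ and generates exactly such a $\sigma$-independent branch.

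The main obstacle is the bookkeeping at degenerate crossings: upgrading the pointwise statement ``$\gamma_k'(\sigma_0)=0$'' to constancy of the chosen analytic branch, and matching it with the hypothesis phrased at the level of the \emph{value} $\gamma_k(0)$ rather than the eigenfunction. When $\gamma_k(0)$ is a simple eigenvalue of $L$ this is immediate. In general I would analyze the first-order splitting of a degenerate $L$-eigenvalue through the Gram matrix $\big(\int_\Gamma w_i w_j\big)$ on the eigenspace: its kernel consists precisely of the zero-trace directions, which generate the constant branches, while its positive directions generate strictly increasing branches. For the application this subtlety is in any case harmless: $\lambda_* + \epsilon$ lies in a spectral gap of $L_\infty$, so no constant branch can equal it, and only the strict monotonicity of the non-constant branches --- established unconditionally above --- enters the crossing count.
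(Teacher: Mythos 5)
Your proposal is correct and follows essentially the same route as the paper: the Hellmann--Feynman differentiation of the quadratic form yields \eqref{lambdaprime}, and the dichotomy follows from nonnegativity of the derivative plus analyticity, with $\gamma_k'(\sigma_0)=0$ forcing a zero trace on $\Gamma$ and hence a $\sigma$-independent branch. Your extra discussion of the Gram matrix $\bigl(\int_\Gamma w_i w_j\bigr)$ at a degenerate initial eigenvalue is in fact more careful than the paper, which simply asserts that $\gamma_k(0) \in \spec(L_\infty)$ implies the associated eigenfunction $u_k(0)$ vanishes on $\Gamma$ --- a step that, as you note, requires choosing the analytic branches adapted to the zero-trace directions of the eigenspace.
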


The existence of an analytic curve of eigenfunctions for $\gamma_k(\sigma)$ is a consequence of the selfadjointness of $L_\sigma$; see \cite{K76}.

\begin{proof}
To simplify the notation we fix a value of $k$ and let $\gamma = \gamma_k(\sigma)$ and $u = u_k(\sigma)$. The eigenvalue equation $L_\sigma u = \gamma u$ is satisfied if and only if
\begin{align}\label{eqn:eigenvalue}
	B_\sigma(u,v) = \gamma \left<u,v\right>
\end{align}
for all $v \in H^1_0(\Omega)$, where $\left<\cdot,\cdot\right>$ denotes the $L^2(\Omega)$ inner product. Differentiating \eqref{eqn:eigenvalue} with respect to $\sigma$, we find that
\begin{align}\label{Bprime}
	B_\sigma'(u,v) + B_\sigma(u',v) =  \gamma' \left<u,v\right> +  \gamma \left<u',v\right>.
\end{align}
On the other hand, letting $v = u'$ in \eqref{eqn:eigenvalue} leads to $B_\sigma(u,u') =  \gamma \left<u,u'\right>$, and so, evaluating \eqref{Bprime} at $v=u$, we obtain 
\[
	 \gamma'  = B_\sigma'(u,u) = \int_\Gamma u^2
\]
as desired.

If $\gamma_k(0) \in \spec(L_\infty)$, the associated eigenfunction $u_k(0)$ vanishes on $\Gamma$, so \eqref{eqn:Steklov} is satisfied for any value of $\sigma$, and hence $\gamma_k(0) \in \spec(L_\sigma)$. The analyticity of the eigenvalue curves then implies $\gamma_k(0) = \gamma_k(\sigma)$ for all $\sigma$. 

If $\gamma_k(\sigma)$ is not strictly increasing, then $\gamma_k'(\sigma_0) = 0$ for some $\sigma_0$. From \eqref{lambdaprime} we infer that the associated eigenfunction vanishes on $\Gamma$, hence $\gamma_k(\sigma_0) \in \sigma(L_\infty)$. The argument in the previous paragraph now implies that $\gamma_k(\sigma)$ is constant, with $\gamma_k(0) = \gamma_k(\sigma_0) \in \spec(L_\infty)$.
\end{proof}

Using Lemmas \ref{lem:DtN1} and \ref{gamma:mon}, we can now verify \eqref{def:simple} and \eqref{def:degenerate}.  Indeed, let $\{\lambda_n(\sigma)\}$ denote the ordered eigenvalues of
$L_\sigma$, which are nondecreasing. As $\sigma \to \infty$ they
converge to the ordered eigenvalues of $L_\infty$, which by definition
has Dirichlet boundary conditions on $\pO \cup \Gamma$ (cf. \cite{AM12}). Since
$\lambda_*$ is the first Dirichlet eigenvalue on each nodal domain of
$\phi_{*}$, and hence is simple (on each domain), we have that the first
eigenvalue of $L_\infty$ is $\lambda_{*}$, with multiplicity
$\nod(\phi_*)$. It follows that
\begin{equation}
  \label{eq:spec_L_infty1}
	\lim_{\sigma \to \infty} \lambda_n(\sigma) = \lambda_{*}, \quad 1 \leq n \leq \nod(\phi_{*})
\end{equation}
and
\begin{equation}
  \label{eq:spec_L_infty2}
	\lim_{\sigma \to \infty} \lambda_n(\sigma) > \lambda_{*}, \quad n > \nod(\phi_{*}).
\end{equation}

If $\lambda_*$ is simple, $L_0$ has precisely $k_*$ eigenvalues $\lambda \leq \lambda_*$. Since the first $\nod(\phi_*)$ of these converge to $\lambda_*$ as $\sigma \to \infty$, the remaining $k_* - \nod(\phi_*)$ will converge to values greater than $\lambda_*$. Choosing $\epsilon>0$ sufficiently small, we conclude that each of these $k_* - \nod(\phi_*)$ eigenvalue curves passes through $\lambda_* + \epsilon$ for some finite $\sigma>0$, in the process giving rise to a negative eigenvalue of $\Lambda_+(\epsilon) + \Lambda_-(\epsilon)$. This verifies \eqref{def:simple}. Similarly, if $\lambda_*$ is degenerate, and we define $k_* = \min \{n \in \bbN : \lambda_n = \lambda_*\}$, then $L_0$ will have precisely $k_* - 1 + \dim \ker (\Delta + \lambda_*)$ eigenvalues $\lambda \leq \lambda_*$, and so $k_* - 1 + \dim \ker (\Delta + \lambda_*) - \nod(\phi_*)$ of them will pass through $\lambda_* + \epsilon$ as $\sigma$ increases from 0 to $\infty$. This verifies \eqref{def:degenerate}.

\section{The one-dimensional case}\label{sec:1d}

We now refine the general results of Section \ref{sec:flow} in the one-dimensional case. Let $\{Z_i\}_{i=1}^m$ be a partition of the interval $[0,\ell]$, so that
\[
	0 < Z_1 < \cdots < Z_m < \ell.
\]
For this partition, and some constant $\sigma \in \bbR$, we define a selfadjoint operator $L_\sigma$ by
\[
	L_\sigma = -\frac{d^2}{dx^2} + q(x)
\]
together with the boundary conditions
\begin{align}
	u(0) &= u(\ell) = 0  \label{BC:dir}\\
	u( Z_i-) &= u( Z_i+)    \label{BC:cont}\\
	u' ( Z_i+) - u' ( Z_i-) &= \sigma u ( Z_i) \ \ \text{for all} \ 1 \leq i \leq m. \label{BC:jump}
\end{align}


Let $\{\lambda_n(\sigma)\}$ denote the ordered eigenvalues of
$L_\sigma$.  As $\sigma\to\infty$, these converge to the ordered eigenvalues of
$L_\infty$, which has Dirichlet conditions imposed at each $Z_i$.  
Moreover, if $\{Z_i\}$ is the nodal partition of some Dirichlet eigenfunction $\phi_* = \phi_{k_*}$,
the spectrum of $L_\infty$ consists of $\lambda_* = \lambda_{k_*}(0)$, with
multiplicity $\nod(\phi_*)$, and other eigenvalues strictly greater
than $\lambda_*$.

We also know
from Lemma \ref{gamma:mon} that $\lambda_{k_*}(\sigma)$ is constant.  Since each $\lambda_n(\sigma)$ is simple and nondecreasing, this implies
\begin{align}\label{1Dlimita}
	\lim_{\sigma \to \infty} \lambda_n (\sigma) = \lambda_*, \quad 1 \leq n \leq k_*
\end{align}
and
\begin{align}\label{1Dlimitb}
	\lim_{\sigma \to \infty} \lambda_n (\sigma) > \lambda_*, \quad n > k_*.
\end{align}
This behavior is illustrated in Figure \ref{fig:lambdan}. 

Comparing \eqref{1Dlimita} and \eqref{1Dlimitb} to \eqref{eq:spec_L_infty1} and \eqref{eq:spec_L_infty2}, it follows that $\nod(\phi_*) = k_*$, and so we obtain Sturm's theorem as a consequence of the monotonicity and simplicity of the eigenvalues of $L_\sigma$ in the one-dimensional case.

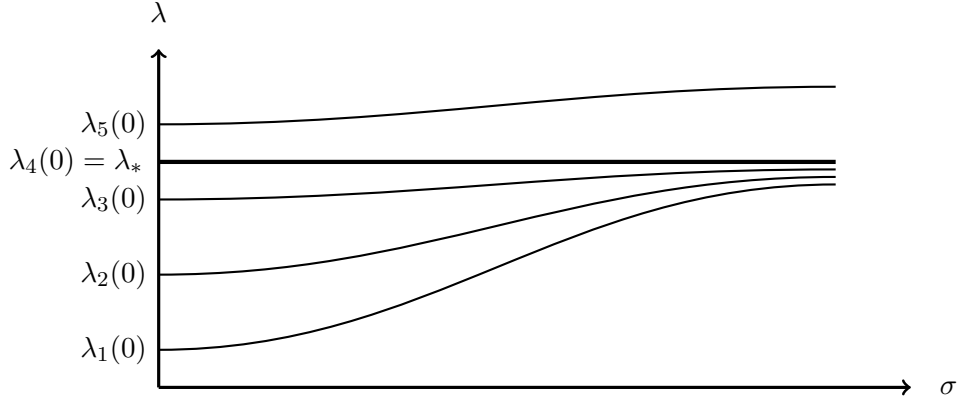
\begin{figure}
\begin{tikzpicture}[scale=1]
	\draw[very thick,->] (0,0) -- (10,0); 
	\node at (10.5,0) {$\sigma$};
	\draw[very thick,->] (0,0) -- (0,4.5); 
	\node at (0,5) {$\lambda$};	
	\draw[ultra thick] (0,3) -- (9,3); 
		\node at (-1.1,3) {$\lambda_4(0) = \lambda_*$};
	\draw[thick] (0,0.5) to [out=0, in=180] (9,2.7); 
		\node at (-0.6,0.5) {$\lambda_1(0)$};
	\draw[thick] (0,1.5) to [out=0, in=180] (9,2.8); 
		\node at (-0.6,1.5) {$\lambda_2(0)$};
	\draw[thick] (0,2.5) to [out=0, in=180] (9,2.9); 
		\node at (-0.6,2.5) {$\lambda_3(0)$};		
	\draw[thick] (0,3.5) to [out=0, in=180] (9,4); 
	\node at (-0.6,3.5) {$\lambda_5(0)$};
\end{tikzpicture}
\caption{The behavior of the first four eigenvalues of $L_\sigma$ in one dimension, with $k_* = 4$. The fourth eigenvalue, $\lambda_4(\sigma) = \lambda_*$, is constant, whereas the first three strictly increase to $\lambda_*$, and the fifth converges to some number strictly greater than $\lambda_*$, as claimed in \eqref{1Dlimita} and \eqref{1Dlimitb}.}
\label{fig:lambdan}
\end{figure}

\section{The rectangle}\label{sec:rectangle}
\label{rectangles}
We now return to the rectangle $[0,\alpha\pi] \times [0,\pi]$, considering a Schr\"odinger operator
\[
	L = -\Delta + q(x)+ r(y)
\]
with separable potential, where $q \in L^\infty(0,\alpha\pi)$ and $r \in L^\infty(0,\pi)$. Let $\{\lambda_m^x\}$ and $\{\lambda_n^y\}$ denote the Dirichlet eigenvalues for $-(d/dx)^2 + q(x)$ and $-(d/dy)^2 + r(y)$, respectively. The Dirichlet spectrum of $L$ is then given by
\[
	\spec(L) = \big\{\lambda_m^x + \lambda_n^y : m,n \in \bbN \big\}.
\]
For convenience we let $\lambda_{mn} = \lambda_m^x + \lambda_n^y$. Now suppose $\lambda_* = \lambda_{m_* n_*} \in \spec(L)$, and let $\Gamma$ denote the nodal set of the corresponding eigenfunction. As above, we define the family $\{L_\sigma\}$ of selfadjoint operators, with analytic eigenvalue curves $\{\gamma_k(\sigma)\}$. Note that $\{\gamma_k(0)\}$ are the eigenvalues of $L$, so for any $(m,n)$ there exists a $k = k(m,n)$ with $\gamma_k(0) = \lambda_{mn}$.

\begin{define}
A lattice point $(m,n)$ is said to give rise to a negative eigenvalue of $\Lambda_+(\epsilon) + \Lambda_-(\epsilon)$ if the curve $\gamma_k(\sigma)$ does, where $k = k(m,n)$ as above.
\end{define}

Our main result, generalizing the picture in Figure \ref{fig:ellipse}, is the following.

\begin{theorem}\label{thm:main}
The point $(m,n)$ gives rise to a negative eigenvalue of $\Lambda_+(\epsilon) + \Lambda_-(\epsilon)$ if and only if  $\lambda_{mn} \leq \lambda_*$ and either $m > m_*$ or $n > n_*$.
\end{theorem}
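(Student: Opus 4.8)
The plan is to exploit the product structure of the problem. Write the eigenfunction as $\phi_{m_* n_*}(x,y) = \psi^x_{m_*}(x)\,\psi^y_{n_*}(y)$, where $\psi^x_{m_*}$ and $\psi^y_{n_*}$ are the $m_*$-th and $n_*$-th Dirichlet eigenfunctions of the one-dimensional factors $-d^2/dx^2 + q(x)$ and $-d^2/dy^2 + r(y)$, with interior zeros $0 < a_1 < \cdots < a_{m_*-1} < \alpha\pi$ and $0 < b_1 < \cdots < b_{n_*-1} < \pi$. Its nodal set $\Gamma$ is then the rectangular grid formed by the vertical segments $\{x = a_i\}$ and the horizontal segments $\{y = b_j\}$. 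First I would observe that on such a grid the surface term of $B_\sigma$ splits additively,
\[
  \sigma\int_\Gamma uv = \sigma\sum_{i}\int_0^{\pi} u(a_i,y)v(a_i,y)\,dy + \sigma\sum_{j}\int_0^{\alpha\pi} u(x,b_j)v(x,b_j)\,dx,
\]
the finitely many grid crossings contributing nothing to the line integral. Since the unperturbed form already separates, this shows $L_\sigma = H^x_\sigma\otimes I + I\otimes H^y_\sigma$, where $H^x_\sigma$ and $H^y_\sigma$ are exactly the one-dimensional operators of Section \ref{sec:1d} built on the nodal partitions $\{a_i\}$ and $\{b_j\}$, i.e.\ $-d^2/dx^2 + q$ and $-d^2/dy^2 + r$ with Dirichlet endpoints and the jump condition \eqref{BC:jump} at the interior nodal points.

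With the separation in hand, I would let $\{\mu_m(\sigma)\}$ and $\{\nu_n(\sigma)\}$ denote the analytic eigenvalue curves of $H^x_\sigma$ and $H^y_\sigma$, so that the product eigenfunctions realize the analytic branches of $L_\sigma$ as the sums $\gamma_{k(m,n)}(\sigma) = \mu_m(\sigma) + \nu_n(\sigma)$, with $\mu_m(0) = \lambda_m^x$ and $\nu_n(0) = \lambda_n^y$. Section \ref{sec:1d} describes each factor: by Lemma \ref{gamma:mon} every $\mu_m$ is nondecreasing (and $\mu_{m_*}$ is constant, equal to $\lambda^x_{m_*}$), while \eqref{1Dlimita}--\eqref{1Dlimitb} give
\[
  \lim_{\sigma\to\infty}\mu_m(\sigma) = \lambda_{m_*}^x \ \ (m\le m_*),\qquad \lim_{\sigma\to\infty}\mu_m(\sigma) > \lambda_{m_*}^x \ \ (m>m_*),
\]
together with the identical statements for $\nu_n$ relative to $n_*$.

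Finally I would read off the crossing criterion. The curve $\gamma_{k(m,n)} = \mu_m + \nu_n$ is nondecreasing, starts at $\gamma_{k(m,n)}(0) = \lambda_{mn}$, and converges to $\mu_m^\infty + \nu_n^\infty$. Since $\lambda_{mn}\in\spec(L)$ and $\mu_m^\infty+\nu_n^\infty\in\spec(L_\infty)$ are both distinct from the gap value $\lambda_*+\epsilon$ (as in Lemma \ref{lem:DtN1}), monotonicity and the intermediate value theorem show the curve meets $\lambda_*+\epsilon$ at some $\sigma>0$ if and only if $\lambda_{mn} < \lambda_*+\epsilon$ and $\mu_m^\infty + \nu_n^\infty > \lambda_*+\epsilon$. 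For $\epsilon$ small the first condition is $\lambda_{mn}\le\lambda_*$, and the second is $\mu_m^\infty + \nu_n^\infty > \lambda_* = \lambda^x_{m_*} + \lambda^y_{n_*}$; by the displayed limits each summand limit attains its starred value exactly when its index is $\le m_*$ (resp.\ $\le n_*$), so the sum strictly exceeds $\lambda_*$ precisely when $m>m_*$ or $n>n_*$. As only finitely many lattice points satisfy $\lambda_{mn}\le\lambda_*$, one choice of $\epsilon$ serves all of them, and combining the two conditions yields the theorem.

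The hard part will be the first step: justifying rigorously that $L_\sigma$ separates. The delta-potential picture makes this transparent at the level of the form, but I would still confirm at the operator level that a product $\psi^x_m\otimes\psi^y_n$ genuinely satisfies the transmission condition \eqref{eqn:Steklov} across $\Gamma$ --- on a vertical segment this reduces to the one-dimensional jump \eqref{BC:jump} for $\psi^x_m$ tensored with $\psi^y_n$ --- and that these products form a complete basis, so that the branches $\mu_m+\nu_n$ exhaust $\spec(L_\sigma)$ with a consistent analytic labelling even across the degeneracies of $\spec(L)$. Once the separation is secured, the remainder is bookkeeping with the monotone one-dimensional curves of Section \ref{sec:1d}.
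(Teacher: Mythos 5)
Your proof is correct and follows essentially the same route as the paper: separate $L_\sigma$ into the one-dimensional operators of Section \ref{sec:1d}, write $\gamma_{k(m,n)}(\sigma)=\lambda^x_m(\sigma)+\lambda^y_n(\sigma)$, and read off the crossing criterion from monotonicity together with \eqref{1Dlimita}--\eqref{1Dlimitb}. You are in fact somewhat more explicit than the paper about justifying the tensor decomposition and the analytic labelling of the summed branches, which the paper passes over quickly; otherwise the arguments coincide.
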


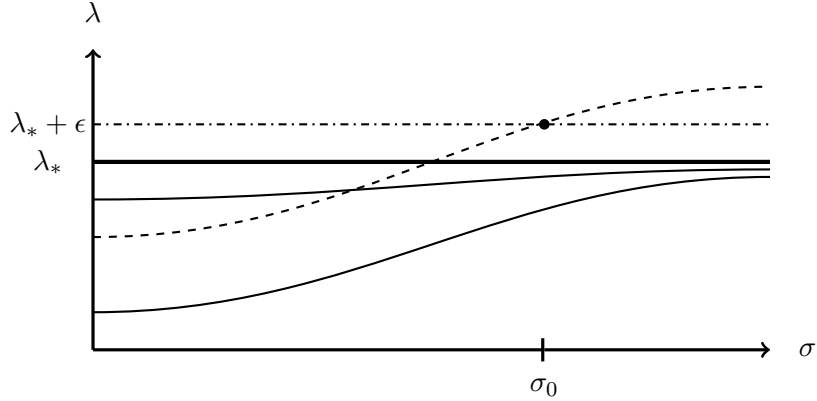
\begin{figure}
\begin{tikzpicture}[scale=1]
	\draw[very thick,-|] (0,0) -- (6,0); 
	\draw[very thick,->] (6,0) -- (9,0); 
	\node at (9.5,0) {$\sigma$};
	\draw[very thick,->] (0,0) -- (0,4); 
	\node at (0,4.5) {$\lambda$};	
	\draw[ultra thick] (0,2.5) -- (9,2.5); 
		\node at (-0.6,2.5) {$\lambda_*$};
	\draw[thick,dashdotted] (0,3) -- (9,3); 
		\node at (-0.6,3) {$\lambda_* + \epsilon$};
	\draw[thick] (0,0.5) to [out=0, in=180] (9,2.3); 
	\draw[thick,dashed] (0,1.5) to [out=0, in=180] (9,3.5); 
	\draw[thick] (0,2) to [out=0, in=180] (9,2.4); 
	\node at (6,-0.5) {$\sigma_0$};
	\fill (6,3) circle[radius=2pt]; 
\end{tikzpicture}
\caption{The behavior of $\gamma_k(\sigma)$ as $\sigma\to\infty$.  The dashed curve has $\gamma_{k}(0) = \lambda_{mn} < \lambda_*$ and $\gamma_k(\infty) > \lambda_*$, and hence generates a negative eigenvalue $-\sigma_0$ for $\Lambda_+ (\epsilon) + \Lambda_- (\epsilon)$. The other two eigenvalues curves correspond to $(m,n) \leq (m_*, n_*)$, and hence stay below $\lambda_*$ for all finite values of $\sigma$.}
\label{fig:lambdamn}
\end{figure}

That is, the eigenvalue curve $\gamma_k(\sigma)$, with initial value
$\gamma_k(0) = \lambda_{mn}$, crosses $\lambda_* + \epsilon$ for some
finite, positive value of $\sigma$ if and only if $m$ and $n$ satisfy
the given conditions.  In the case $V \equiv 0$ these conditions
reduce to $(m,n) \in \overline{E_*} \setminus R_*$, as promised in the
Introduction.

\begin{proof}
Let $k = k(m,n)$, so that $\gamma_k(0) = \lambda_{mn} = \lambda_m^x + \lambda_n^y$.

Given $\lambda_* = \lambda^x_{m_*} + \lambda^y_{n_*}$ as above, let $u^x_{m_*}$  and $u^y_{n_*}$ denote the eigenfunctions for $\lambda_{m_*}^x$ and $\lambda_{n_*}^y$, with nodal sets $\{Z^x_k\} \subset (0,\alpha\pi)$ and $\{Z^y_k\} \subset (0,\pi)$, respectively. With respect to these nodal partitions, we define operators $L^x_\sigma$ and $L^y_\sigma$, as in Section \ref{sec:1d}, for $\sigma \in \bbR$. Denoting the eigenvalues by $\{\lambda_m^x(\sigma)\}$ and $\{\lambda_n^y(\sigma)\}$, we have $\gamma_k(0) = \lambda_m^x(0) + \lambda_n^y(0)$, hence
\begin{align}\label{lambdamn}
	\gamma_k(\sigma) = \lambda^x_m(\sigma) + \lambda^y_n(\sigma)
\end{align}
for all $\sigma$.

Since $\gamma_k(\sigma)$ is nondecreasing, the equation $\gamma_k(\sigma) = \lambda_* + \epsilon$ will be satisfied for some $\sigma>0$ if and only if $\gamma_k(0) \leq \lambda_*$ and $\gamma_k(\sigma) > \lambda_*$ for sufficiently large $\sigma$. The condition $\gamma_k(0) \leq \lambda_*$ is equivalent to $\lambda_{mn} \leq \lambda_*$. On the other hand, it follows from \eqref{1Dlimita} and \eqref{1Dlimitb} that
\[
	\lim_{\sigma\to\infty} \lambda_m^x(\sigma) > \lambda^x_{m_*}
\]
if and only if $m > m_*$, and similarly for the limit of $\lambda_n^y(\sigma)$, hence
\[
	\lim_{\sigma\to\infty} \gamma_k(\sigma) > \lambda^x_{m_*} + \lambda^x_{n_*} = \lambda_*
\]
holds if and only if either $m > m_*$ or $n > n_*$ (see
Figure~\ref{fig:lambdamn} for an example).
\end{proof}

\appendix

\section{Example for a Rectangle}
\label{app:rec}

Let us consider first the one-dimensional eigenvalue problem for the case $q(x) = 0$ from Section \ref{sec:1d}.  Namely, we wish to compute the eigenvalues $\{ \lambda_n (\sigma) \}$ for $\sigma \geq 0$.  

\subsection{$\{ Z_k \} = \{  \frac12 \ell \}$}

The second Dirichlet eigenfunction for the Laplacian the interval $[0,\ell]$ has a zero at $\ell/2$.  Using this nodal point to define the boundary conditions in $\sigma$, as in Section \ref{sec:1d}, we look for the eigenvalues $\lambda_n (\sigma)$.  We will use the notation $\lambda_n (\sigma; 2)$ to denote the $n$th eigenvalue that arises from the spectral flow in $\sigma$ set at the nodal point of the second Dirichlet eigenfunction.  Symmetry considerations guarantee that the corresponding lowest eigenfunction, $u_1 (x)= u_1 (x, \sigma; 2)$, is symmetric with respect to $\ell/2$.  The eigenvalues $\lambda_n (\sigma; 2)$ in this case can be found by taking $u_1 (x) = \sin (\kappa x)$ on $[0, \ell/2]$ for $\kappa^2 = \lambda_n$.  Condition \eqref{BC:jump} gives
\[
-2 u_1' \left( \frac{\ell}{2} \right) = \sigma u_1 \left( \frac{\ell}{2} \right),
\]
and hence
\begin{equation}
\label{eq:Zk1_2}
\sigma = - 2 \kappa \cot \left( \kappa \frac{\ell}{2} \right).
\end{equation}
Thus, $\lambda_1 (\sigma; 2) = \kappa^2$ is given as the implicit solution to \eqref{eq:Zk1_2} for finding the lowest eigenvalue.  

\subsection{$\{ Z_k \} = \{ \frac13 \ell, \frac23 \ell \}$}

Now, let us consider the next excited state, or the case of the nodal
set given by $2$ zeros equidistributed throughout the interval.  As before the lowest eigenfunction of $L_\sigma$, denoted $u_1 (x)= u_1 (x, \sigma; 3)$, is symmetric with respect to $\ell/2$ and we can write
\begin{equation*}
u_1 (x)= \left\{  \begin{array}{cc}
a \sin (\kappa x), & x \in [0, \ell/3] \\
b \cos (\kappa (\ell/2-x)) &  x \in [\ell/3, \ell/2]
\end{array} \right.
\end{equation*}
Hence, conditions \eqref{BC:cont} and \eqref{BC:jump} at $\ell/3$ imply
\begin{align*}
 a \sin \left(  \frac{ \kappa \ell}{3} \right) = b \cos \left(  \frac{ \kappa \ell}{6} \right)  & = c , \\
- \left(  a \kappa \cos \left(  \frac{ \kappa \ell}{3} \right)  - b \kappa \sin \left(  \frac{ \kappa \ell}{6} \right) \right) & = \sigma c,
\end{align*}
for $c = u_1 (\ell/3)$.  Solving out for $c$, we arrive at
\[
\sigma = \kappa \left(  \tan \left(  \frac{ \kappa \ell}{6} \right)  - \cot \left(  \frac{ \kappa \ell}{3} \right)  \right),
\]
which can be solved implicitly for $\lambda_1 (\sigma; 3) = \kappa^2$.  

A similar approach applies to find the second eigenfunction $u_2 (x) = u_2 (x,\sigma; 3)$, which is anti-symmetric with respect to $\ell/2$.  Following the same logic, we arrive at
\[
\sigma = -\kappa \left(  \cot \left(  \frac{ \kappa \ell}{3} \right)  + \cot \left(  \frac{ \kappa \ell}{6} \right)  \right),
\]
which can be solved implicitly for $\lambda_2 (\sigma; 3) = \kappa^2$.  

\subsection{An example with nodal deficiency $3$ on the rectangle}

Let us now consider a rectangle of the form $[0,\pi] \times [0, \alpha
\pi]$ with $\alpha < 1$ but such that $1-\alpha \ll 1$.  We observe in
this case that for the Laplacian with Dirichlet boundary conditions, 
\[
1^2 + \left( \frac{1}{\alpha} \right)^2 = \lambda_{1,1}  < \lambda_{2,1} < \lambda_{1,2} < \lambda_{2,2} < \lambda_{3,1} < \lambda_{1,3} = 1^2 + \left( \frac{3}{\alpha} \right)^2.
\]
Therefore the sixth eigenvalue $\lambda_6 = \lambda_{1,3}$ has $3$
nodal domains and therefore nodal deficiency $3$, see Figure \ref{fig:a1}.

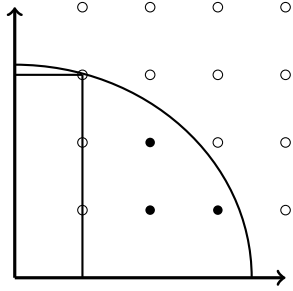
\begin{figure}[!tbp]
	\begin{tikzpicture}[scale=0.9]
		\draw[thick] (0,3) -- (1,3); 
		\draw[thick] (1,0) -- (1,3); 
		\draw[thick] (3.5,0) arc (0:90:3.5 and 3.15);
		\draw[very thick,->] (0,0) -- (4,0); 
		\draw[very thick,->] (0,0) -- (0,4); 
		\draw (1,1) circle[radius=2pt];
		\draw (1,2) circle[radius=2pt];
		\draw (1,3) circle[radius=2pt];
		\draw (1,4) circle[radius=2pt];
		\fill (2,1) circle[radius=2pt];
		\fill (2,2) circle[radius=2pt];
		\draw (2,3) circle[radius=2pt];
		\draw (2,4) circle[radius=2pt];
		\fill (3,1) circle[radius=2pt];
		\draw (3,2) circle[radius=2pt];
		\draw (3,3) circle[radius=2pt];
		\draw (3,4) circle[radius=2pt];
		\draw (4,1) circle[radius=2pt];
		\draw (4,2) circle[radius=2pt];
		\draw (4,3) circle[radius=2pt];
		\draw (4,4) circle[radius=2pt];
	\end{tikzpicture}
\caption{Illustrating the nodal deficiency count for the rectangle example in the Appendix.}
\label{fig:a1}
\end{figure}

Setting $\lambda_* = \lambda_6 = \lambda_{1,3}$, we obtain the spectral flow 
\begin{align*}
	\gamma_6 (\sigma) &= \lambda_*, \\
	\gamma_5 (\sigma)& = 3^2 + \lambda_1^y (\sigma; 3), \\
	\gamma_4 (\sigma) &= 2^2 + \lambda_2^y (\sigma; 3), \\
	\gamma_3 (\sigma) &= 1^2 + \lambda_2^y (\sigma; 3), \\
	\gamma_2 (\sigma) &= 2^2 + \lambda_1^y (\sigma; 3), \\
	\gamma_1 (\sigma) &= 1^2 + \lambda_1^y (\sigma; 3),
\end{align*}
which was the flow depicted on Figure~\ref{fig:qgraphrect}(right).
The above equations can be analyzed to show that
$\gamma_2, \gamma_4, \gamma_5$ all cross $\gamma_6$ as
$\sigma \to \infty$, whereas $\gamma_1$ and $\gamma_3$ do not.

\bibliographystyle{amsplain}
\bibliography{maslov}
 
\end{document}